\documentclass[a4paper]{amsart}
\numberwithin{equation}{section}
\newtheorem{theorem}{Theorem}[section]

\newtheorem{definition}{Definition}[section]

\theoremstyle{remark}

\usepackage{amsmath, hyperref}
\usepackage{color}
\usepackage{graphicx}
\usepackage{subfigure}

\title[Notes on the norm of pre-Schwarzian derivatives]{Notes on the norm of pre-Schwarzian derivatives on bi-univalent functions of order $\alpha$}

\subjclass[2010]{30C45}

\keywords{univalent, locally univalent, bi-univalent, bi-starlike, subordination, pre-Schwarzian}

\begin{document}
\begin{abstract}
In the present paper we estimate the norm of the pre-Schwarzian derivative of bi-starlike functions of order $\alpha$ where $\alpha\in[0,1)$.
Initially this problem was handled by Rahmatan et al. in [Bull Iran Math Soc {\bf43}: 1037-1043, 2017]. We pointed out that the proofs and bounds by Rahmatan et al. are incorrect and present correct proofs and bounds.
\end{abstract}

\author[H. Mahzoon and R. Kargar] {H. Mahzoon and R. Kargar}
\address{Department of Mathematics, Islamic Azad University, West Tehran
Branch, Tehran, Iran}
\email {hesammahzoon1@gmail.com}
%------------------------------------------------------------
\address{Young Researchers and Elite Club,
Ardabil Branch, Islamic Azad University, Ardabil, Iran}
       \email{rkargar1983@gmail.com}

\maketitle

\section{Introduction}
Let $\mathcal{H}$ be the family of analytic functions $f$ on the open unit disc $\Delta=\{z\in \mathbb{C}:|z|<1\}$ and $\mathcal{A}$ be a subclass of $\mathcal{H}$ that its members are normalized by the condition $f(0)=0=f'(0)-1$. This means that each $f\in\mathcal{A}$ has the following form
\begin{equation}\label{f}
f(z)=z+ \sum_{n=2}^{\infty}a_{n}z^{n}\quad(z\in\Delta).
\end{equation}
We denote by $\mathcal{U}\subset\mathcal{A}$ the family of univalent (one--to--one) functions $f$ in $\Delta$. Let $f$ and $g$ belong to the class $\mathcal{H}$. A function $f$ is called to be {\it subordinate} to $g$, written as
\begin{equation*}
  f(z)\prec g(z)\quad{\rm or}\quad f\prec g,
\end{equation*}
if there exists a Schwarz function $\phi:\Delta\rightarrow\Delta$ with
the following properties
\begin{equation*}
  \phi(0)=0\quad{\rm and}\quad |\phi(z)|<1\quad(z\in\Delta),
\end{equation*}
such that $f(z)=g(\phi(z))$ for all $z\in\Delta$. If
$g\in\mathcal{U}$, then the following geometric equivalence
relation holds true:
\begin{equation*}
  f(z)\prec g(z)\Leftrightarrow f(0)=g(0)\quad{\rm and}\quad f(\Delta)\subset g(\Delta).
\end{equation*}

It is known that the {\it Koebe} function
\begin{equation*}%\label{Koebe}
  k(z):=\frac{z}{(1-z)^2}=z+2z^2+3z^3+\cdots+nz^n+\cdots\quad(z\in\Delta),
\end{equation*}
maps the open unit disc $\Delta$ onto the entire plane minus the interval $(-\infty,-1/4]$.
Also, the well-known {\it Koebe One-Quarter Theorem} states that the image of the open unit disc $\Delta$ under every function $f\in\mathcal{U}$ contains the disc $\{w:|w|<\frac{1}{4}\}$, see \cite[Theorem 2.3]{Duren}. Therefore every function $f$ in the class $\mathcal{U}$ has an inverse $f^{-1}$ which satisfies the following conditions:
\begin{equation*}
  f^{-1}(f(z))=z\quad(z\in\Delta)
\end{equation*}
and
\begin{equation*}
  f(f^{-1}(w))=w\quad(|w|<r_0(f);\ \ r_0(f)\geq1/4),
\end{equation*}
where
\begin{equation}\label{f inverse}
  f^{-1}(w)=w-a_2w^2+(2a_2^2-a_3)w^3-(5a_2^3-5a_2a_3+a_4)w^4+\cdots=:g(w).
\end{equation}
A function $f\in\mathcal{A}$ is {\it bi-univalent} in $\Delta$ if, and only if, both $f$ and $f^{-1}$ are univalent in $\Delta$. We denote by $\Sigma$ the class of bi-univalent functions in $\Delta$. The functions
\begin{equation*}
  f_1(z)=\frac{z}{1-z},\quad f_2(z)=-\log(1-z)\quad {\rm and} \quad f_3(z)=\frac{1}{2}\log\left(\frac{1+z}{1-z}\right),
\end{equation*}
with the corresponding inverse functions, respectively
\begin{equation*}
  f_1^{-1}(w)=\frac{w}{1+w},\quad f_2^{-1}(w)=\frac{\exp(w)-1}{\exp(w)}\quad {\rm and} \quad f_3^{-1}(w)=\frac{\exp(2w)-1}{\exp(2w)+1},
\end{equation*}
belong to the class $\Sigma$.

A function $f\in\mathcal{A}$ is called starlike (with respect to $0$) if $tw\in f(\Delta)$ whenever $w\in f(\Delta)$ and $t\in[0, 1]$. We denote by $\mathcal{S}^*$ the class of the starlike functions in $\Delta$. Also, we say that a function $f\in\mathcal{A}$ is starlike of order $\alpha$ ($0\leq\alpha<1$) if, and only if,
\begin{equation}\label{zf prime f starlike}
  {\rm Re}\left\{\frac{zf'(z)}{f(z)}\right\}>\alpha\quad(z\in\Delta).
\end{equation}
The class of the starlike functions of order $\alpha$ in $\Delta$ is denoted by
$\mathcal{S}^*(\alpha)$. Now by definition of the starlike functions of order $\alpha$ we recall the following definition.
%---------------------------------------------------------------
\begin{definition}\label{Def1}
 Let $\alpha\in [0,1)$. A function $f\in\Sigma$ given by \eqref{f} is said to be in the class $\mathcal{S}^*_\Sigma(\alpha)$ if the conditions \eqref{zf prime f starlike} and
\begin{equation}\label{wf prime f starlike}
  {\rm Re}\left\{\frac{wg'(w)}{g(w)}\right\}>\alpha\quad(w\in\Delta),
\end{equation}
hold true where $w=f(z)$ and $g=f^{-1}$.
\end{definition}
Denote by $\mathcal{V}(\alpha)$ the class of all $f\in\mathcal{A}$ satisfying the following condition
\begin{equation}\label{V alpha}
  {\rm Re}\left\{\left(\frac{z}{f(z)}\right)^2 f'(z)\right\}>\alpha\quad(0\leq \alpha<1, z\in\Delta).
\end{equation}
The class $\mathcal{V}(\alpha)$ was considered by Obradovi\'{c} and Ponnusamy in \cite{ObPo}. Now we recall the following definition.
%-------------------------------------------------------
\begin{definition}\label{Def2}
 Let $\alpha\in [0,1)$. A function $f\in\Sigma$ given by \eqref{f} is said to be in the class $\mathcal{V}_\Sigma(\alpha)$ if the conditions \eqref{V alpha} and
\begin{equation}\label{w g(w)}
  {\rm Re}\left\{\left(\frac{w}{g(w)}\right)^2 g'(w)\right\}>\alpha\quad(w\in\Delta),
\end{equation}
hold true where $w=f(z)$ and $g=f^{-1}$.
\end{definition}
%------------------------------------------------------

Denote by $\mathcal{LU}$ the subclass of $\mathcal{H}$
consisting of all locally univalent functions, namely, $\mathcal{LU}:=\{f\in\mathcal{H}: f'(z)\neq 0,\ \ z\in\Delta\}$. For a $f\in\mathcal{LU}$ the pre--Schwarzian and Schwarzian derivatives of $f$ are defined as follows
\begin{equation*}%\label{pre-Sch}
  T_f(z):=\frac{f''(z)}{f'(z)}\quad{\rm and}\quad S_f(z):=T'_f(z)-\frac{1}{2}T^2_f(z),
\end{equation*}
respectively. We note that the quantity $T_f$ (resp. $S_f$) is analytic on $\Delta$ precisely when $f$ is analytic (resp. meromorphic) and locally univalent on $\Delta$. Since $\mathcal{LU}$ is a vector space over $\mathbb{C}$ (see \cite{Hornich}), thus we can define the norm of $f\in\mathcal{LU}$ by
\begin{equation*}
  ||f||=\sup_{z\in\Delta}(1-|z|^2)\left|\frac{f''(z)}{f'(z)}\right|.
\end{equation*}
This norm has significance in the theory of Teichm\"{u}ller spaces, see \cite{Astala}. It is known that $||f||<\infty$ if and only if $f$ is uniformly locally univalent. Also, notice that if $||f||\leq1$, then $f$ is univalent in $\Delta$ and conversely, if $f$ univalent in $\Delta$, then $||f||\leq 6$ and the equality is attained for the Koebe function and its rotation. In fact, both of these bounds are sharp, see \cite{BecPom}. Also, if $f$ is starlike of order $\alpha\in[0,1)$, then we have the sharp estimate $||f||\leq 6-4\alpha$ (see e.g. \cite{Yamashita}).

The following theorems were wrongly proven by Rahmatan et al. \cite[Theorem 2.2 and Theorem 2.4]{Rahmatan2017}.\\
%--------------------------------------------------
{\bf Theorem A.} Let the function $f(z)$ given by \eqref{f} be in the class $\mathcal{S}^*_\Sigma(\alpha)$ where $\alpha\in[0,1)$. Then
\begin{equation}\label{norm f Rahm S}
  ||f||\leq \min\{6-4\alpha, 4\alpha+2\}.
\end{equation}
%--------------------------------------------------
{\bf Theorem B.} Let the function $f(z)$ given by \eqref{f} be in the class $\mathcal{V}_\Sigma(\alpha)$ where $\alpha\in[0,1)$. Then
\begin{equation}\label{norm f Rahm V}
  ||f||\leq \min\{10-8\alpha, 6-8\alpha\}.
\end{equation}
We notice that $6-8\alpha\in (-2,6]$ when $\alpha\in[0,1)$.

In the present paper we give a correct proof for Theorem A and some remarks on the Theorem B.
%*******************************************************
\section{On the Theorem A}
The correct version of Theorem A is contained in the following theorem:
%----------------------------------------------
\begin{theorem}\label{t1}
Let $\alpha\in[0,1)$.
  If the function $f$ of the form \eqref{f} is in the class $\mathcal{S}^*_\Sigma(\alpha)$, then
\begin{equation}\label{norm f for starlike singma}
  ||f||\leq \left\{
              \begin{array}{ll}
                6, & \hbox{$\alpha=0$;} \\
                \min\left\{6-4\alpha,\frac{4(1-\alpha)}{\alpha}\right\}, & \hbox{$0< \alpha< \frac{1}{2}$;} \\
                4, & \hbox{$\alpha=1/2$;} \\
                2, & \hbox{$1/2<\alpha<1$.}
              \end{array}
            \right.
\end{equation}
\end{theorem}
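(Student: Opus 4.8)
The plan is to obtain \eqref{norm f for starlike singma} as the minimum of two estimates: the classical one coming from the starlikeness of $f$, and a refined one coming from the starlikeness of the inverse $g=f^{-1}$. Since every $f\in\mathcal{S}^*_\Sigma(\alpha)$ satisfies \eqref{zf prime f starlike}, we have $\mathcal{S}^*_\Sigma(\alpha)\subseteq\mathcal{S}^*(\alpha)$, so the known sharp estimate $\|f\|\le 6-4\alpha$ for starlike functions of order $\alpha$ (quoted in the Introduction) applies verbatim. This already disposes of the case $\alpha=0$ and furnishes the first competitor in the minimum when $0<\alpha<\tfrac12$.

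For the second estimate I would write $\frac{zf'(z)}{f(z)}=\alpha+(1-\alpha)\frac{1+\omega(z)}{1-\omega(z)}$ and, using $g=f^{-1}\in\mathcal{S}^*(\alpha)$, $\frac{wg'(w)}{g(w)}=\alpha+(1-\alpha)\frac{1+\psi(w)}{1-\psi(w)}$, with $\omega,\psi$ Schwarz functions. Differentiating $\log f'=\log\frac{zf'}{f}+\log\frac{f}{z}$ gives the representation
\begin{equation*}
T_f(z)=\frac{2(1-\alpha)\,\omega'(z)}{(1-\omega(z))\bigl(1+(1-2\alpha)\omega(z)\bigr)}+\frac{2(1-\alpha)\,\omega(z)}{z\,(1-\omega(z))},
\end{equation*}
and the analogous formula for $T_g$ in terms of $\psi$. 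The extra ingredient is a functional relation between $\omega$ and $\psi$, obtained from $f'(g(w))g'(w)=1$ together with $f(g(w))=w$: these force $\bigl(\tfrac{zf'(z)}{f(z)}\big|_{z=g(w)}\bigr)\cdot\tfrac{wg'(w)}{g(w)}=1$, which after simplification reads $\omega(g(w))+\psi(w)=2\alpha\,\omega(g(w))\,\psi(w)$, valid near the origin and hence on the component of $\{w\in\Delta:g(w)\in\Delta\}$ containing $0$.

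Then I would estimate $(1-|z|^2)|T_f(z)|$. The second term is handled by $|\omega(z)|\le|z|$ and $(1-|z|^2)/(1-|\omega(z)|)\le 1+|z|\le 2$, giving at most $4(1-\alpha)$; the first term is handled by the Schwarz--Pick inequality $|\omega'(z)|\le(1-|\omega(z)|^2)/(1-|z|^2)$ together with $|1+(1-2\alpha)\omega(z)|\ge 1-|1-2\alpha|\,|z|$, and here the case split $\alpha\le\tfrac12$ versus $\alpha\ge\tfrac12$ — i.e.\ $|1-2\alpha|=1-2\alpha$ versus $2\alpha-1$ — is exactly what produces the threshold $\tfrac12$ in the statement, the first term being at most $\frac{2(1-\alpha)}{\alpha}$ when $\alpha\le\tfrac12$ and at most $2$ when $\alpha\ge\tfrac12$. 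Invoking the functional relation to constrain $\omega\circ g$ (the condition $|\psi|<1$ forces $\omega\circ g$ to omit an explicit disc) sharpens the estimate of the $\frac{d}{dz}\log\frac{f(z)}{z}$–part and leaves the competitor $\frac{4(1-\alpha)}{\alpha}$ for $0<\alpha<\tfrac12$, the value $4$ for $\alpha=\tfrac12$, and the value $2$ for $\tfrac12<\alpha<1$. Taking the minimum with $6-4\alpha$ and simplifying — the minimum is $6-4\alpha$ on $(0,\tfrac12)$ since there $(2\alpha-1)(\alpha-2)>0$, both competitors equal $4$ at $\alpha=\tfrac12$, and $2<6-4\alpha<4$ on $(\tfrac12,1)$ — yields \eqref{norm f for starlike singma}.

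The main obstacle is precisely the use of the inverse: because $f$ need not map $\Delta$ into $\Delta$, the composition $\omega\circ g$ and the functional relation above are a priori only available on $\{w\in\Delta:g(w)\in\Delta\}$, whereas $\|f\|$ is a supremum over all of $\Delta$; reconciling these — by showing that the extremal behaviour of $(1-|z|^2)|T_f(z)|$ is attained inside the region reached by $g$, or by matching the refined estimate there against the bound $6-4\alpha$ on the complement — is the delicate step, and is presumably where the argument in \cite{Rahmatan2017} goes astray. Pinning the refined constant to exactly $2$ (uniformly) for $\tfrac12<\alpha<1$, rather than to $2$ plus a positive remainder, is the part I would want to verify with most care, including against explicit members such as $z/(1-cz)\in\mathcal{S}^*_\Sigma(\tfrac{1}{1+c})$.
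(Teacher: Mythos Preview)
The paper's argument differs from yours at the decisive step. Rather than working with the Schwarz function $\omega$ attached to $zf'/f$ and then trying to constrain it through a functional relation with $\psi$, the paper translates the inverse condition \emph{directly} into a statement about $f$: substituting $g(w)=z$, $g'(w)=1/f'(z)$, $w=f(z)$ into $\mathrm{Re}\{wg'(w)/g(w)\}>\alpha$ is read (for all $z\in\Delta$) as the subordination
\[
\frac{f(z)}{zf'(z)}\prec\frac{1+(1-2\alpha)z}{1-z},
\]
that is, a bound on the \emph{reciprocal} of $zf'/f$. This furnishes a fresh Schwarz function $\phi$, unrelated to your $\omega$, with $\dfrac{f'(z)}{f(z)}=\dfrac{1-\phi(z)}{z\bigl(1+(1-2\alpha)\phi(z)\bigr)}$; a single logarithmic differentiation of this identity plus Schwarz--Pick yields
\[
\|f\|\le \frac{4(1-\alpha)}{1-|1-2\alpha|},
\]
which is $\infty$ at $\alpha=0,\tfrac12$, finite on $(0,\tfrac12)$, and equal to $2$ for every $\tfrac12<\alpha<1$. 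There is no functional relation between two Schwarz functions and no need to track $\omega\circ g$ at all.

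Your route, by contrast, is not actually carried out. You assert that the relation $\omega(g(w))+\psi(w)=2\alpha\,\omega(g(w))\psi(w)$ ``sharpens the estimate of the $\frac{d}{dz}\log\frac{f(z)}{z}$--part'' so as to produce the competitors $\frac{4(1-\alpha)}{\alpha}$, $4$, $2$, but no computation is shown, and your own termwise bounds (first term at most $2$ for $\alpha>\tfrac12$, second term at most $4(1-\alpha)$) add up only to $6-4\alpha$, not to $2$. To reach $2$ the second term would have to vanish entirely, and nothing in your outline forces this: the constraint you extract on $\omega\circ g$ lives only on $g(\Delta)$, as you yourself flag, and you propose no mechanism to upgrade it to a pointwise improvement on $|\omega(z)|\le|z|$ throughout $\Delta$. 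The paper bypasses this difficulty by never introducing $\omega$ in the first place and working instead with the Schwarz function $\phi$ for $f/(zf')$; that reformulation is the idea you are missing.
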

%-------------------------------------------------------
\begin{proof}
  Let $f\in \mathcal{S}^*_\Sigma(\alpha)$ be given by \eqref{f}. Then by Definition \ref{Def1} the following conditions hold true:
\begin{equation}\label{star p1}
  {\rm Re}\left\{\frac{zf'(z)}{f(z)}\right\}>\alpha\quad(z\in\Delta)
\end{equation}
  and
\begin{equation}\label{star p2}
  {\rm Re}\left\{\frac{wg'(w)}{g(w)}\right\}>\alpha\quad(w\in\Delta),
\end{equation}
where $w=f(z)$ and $g=f^{-1}$. Since \eqref{star p1} holds true by \cite{Yamashita} we conclude that $||f||\leq 6-4\alpha$ where $\alpha\in [0,1)$. On the other hand, since the mapping
\begin{equation}\label{F}
  F(w):=\frac{1+(1-2\alpha)w}{1-w}\quad(w\in\Delta)
\end{equation}
maps $\Delta$ onto the right half-plane having real part greater than $\alpha$, thus \eqref{star p2} implies that
\begin{equation}\label{w g prime sub F}
  \frac{wg'(w)}{g(w)}\prec F(w)\quad(w\in\Delta).
\end{equation}
By the identity $\frac{{\rm d}}{{\rm d}w}f^{-1}(w)=\frac{1}{f'(z)}$ (see \cite[Eq. (1.2)]{Rahmatan2017}) \eqref{w g prime sub F} is equivalent to
\begin{equation}\label{star p3}
\frac{f(z)}{zf'(z)}\prec\frac{1+(1-2\alpha)z}{1-z}\quad(z\in\Delta).
\end{equation}
 Now by \eqref{star p3} and definition of subordination there exists a Schwarz function $\phi$ with $\phi(0)=0$ and $|\phi(z)|<1$ $(z\in\Delta)$ such that
\begin{equation*}
  \frac{f(z)}{zf'(z)}=\frac{1+(1-2\alpha)\phi(z)}{1-\phi(z)}\quad(z\in\Delta),
\end{equation*}
 or equivalently
 \begin{equation}\label{star p4}
  \frac{f'(z)}{f(z)}=
  \frac{1-\phi(z)}{z(1+(1-2\alpha)\phi(z))}\quad(0\neq z\in\Delta).
\end{equation}
Taking logarithm on both sides of \eqref{star p4} and differentiating, after simplification, we obtain
\begin{equation}\label{f prime2}
  \frac{f''(z)}{f'(z)}=\frac{2(\alpha-1)\phi(z)}{z(1+(1-2\alpha)\phi(z))}
  +\frac{2(\alpha-1)\phi'(z)}{(1-\phi(z))(1+(1-2\alpha)\phi(z))}\quad
  (0\neq z\in\Delta).
\end{equation}
The well-known Schwarz-Pick lemma states that for a Schwarz function $\phi$ the following inequality holds true:
\begin{equation}\label{Sch-Pick}
  |\phi'(z)|\leq \frac{1-|\phi(z)|^2}{1-|z|^2}\quad(z\in\Delta).
\end{equation}
Now by \eqref{Sch-Pick} and since $|\phi(z)|\leq |z|$ (see \cite{Duren}), the relation \eqref{f prime2} implies that
\begin{align*}
  \left|\frac{f''(z)}{f'(z)}\right| &=\left|\frac{2(\alpha-1)\phi(z)}{z(1+(1-2\alpha)\phi(z))}
  +\frac{2(\alpha-1)\phi'(z)}{(1-\phi(z))(1+(1-2\alpha)\phi(z))}\right| \\
  &\leq \frac{2|\alpha-1||\phi(z)|}{|z|(1-|1-2\alpha||\phi(z)|)}
  +\frac{2|\alpha-1||\phi'(z)|}{(1-|\phi(z)|)(1-|1-2\alpha||\phi(z)|)}\\
  &\leq \frac{2(1-\alpha)}{1-|1-2\alpha||z|}+\frac{2(1-\alpha)}{1-|1-2\alpha||z|}.
  \frac{1+|z|}{1-|z|^2}.
\end{align*}
However we get
\begin{align*}
  ||f||&=\sup_{z\in\Delta}(1-|z|^2)\left|\frac{f''(z)}{f'(z)}\right| \\
  &\leq\sup_{z\in\Delta}\left\{\frac{2(1-\alpha)(1-|z|^2)}{1-|1-2\alpha||z|}
  +\frac{2(1-\alpha)(1+|z|)}{1-|1-2\alpha||z|}\right\}\\
  &=\frac{4(1-\alpha)}{1-|1-2\alpha|}=:\varphi(\alpha).
\end{align*}
Now we consider the following cases.\\
{\bf Case 1.} $\alpha=0,1/2$. In this case we have $\varphi(\alpha)=\infty$. Therefore 
\begin{equation*}
\min\{6-4\alpha,\varphi(\alpha)\}=6-4\alpha.
\end{equation*}
{\bf Case 2.} $0<\alpha<1/2$. In this case $\varphi(\alpha)$ becomes $\frac{2(1-\alpha)}{\alpha}\in(2,\infty)$. Also we have $6-4\alpha\in(2,4)$ when $\alpha\in(1/2,1)$. Thus 
\begin{equation*}
||f||\leq \min\left\{6-4\alpha,\frac{4(1-\alpha)}{\alpha}\right\}.
\end{equation*}
{\bf Case 3.} $1/2<\alpha<1$. In this case we have $\varphi(\alpha)=2$. Thus
\begin{equation*}
\min\{6-4\alpha,\varphi(\alpha)\}=2.
\end{equation*}
Because $2<6-4\alpha$ when $\alpha\in(1/2,1)$. This completes the proof.
\end{proof}
%*******************************************************
\section{On the Theorem B}
In this section we give some remarks on the Theorem B. Let a function $f$ be of the form \eqref{f} belongs to the class $\mathcal{V}_\Sigma(\alpha)$. Then the conditions \eqref{V alpha} and \eqref{w g(w)} hold true. From the geometric meaning of the function $(1+(1-2\alpha)z)/(1-z)$, subordination principle and \eqref{V alpha} we have
\begin{equation*}
  \left(\frac{z}{f(z)}\right)^2 f'(z)\prec \frac{1+(1-2\alpha)z}{1-z}\quad(z\in\Delta).
\end{equation*}
Using definition of subordination there exists a Schwarz function $\phi$ so that
\begin{equation}\label{B p1}
  \left(\frac{z}{f(z)}\right)^2 f'(z)=\frac{1+(1-2\alpha)\phi(z)}{1-\phi(z)}\quad(z\in\Delta).
\end{equation}
With a simple calculation \eqref{B p1} yields
\begin{equation}\label{B p2}
  \frac{f''(z)}{f'(z)}=2\left(\frac{f'(z)}{f(z)}-\frac{1}{z}\right)
  +\frac{(1-2\alpha)\phi'(z)}{1+(1-2\alpha)\phi(z)}+\frac{\phi'(z)}{1-\phi(z)}
\quad(z\in\Delta).
\end{equation}
Again, from \eqref{B p1} we get
\begin{equation}\label{B p3}
  \frac{f'(z)}{f(z)}-\frac{1}{z}=\frac{f(z)}{z}\left(\frac{1+(1-2\alpha)\phi(z)}
  {z(1-\phi(z))}-1\right)\quad(z\in\Delta).
\end{equation}
By substituting \eqref{B p3} into \eqref{B p2}, we have
\begin{equation}\label{B p4}
  \frac{f''(z)}{f'(z)}=2\frac{f(z)}{z}\left(\frac{1+(1-2\alpha)\phi(z)}
  {z(1-\phi(z))}-1\right)
  +\frac{(1-2\alpha)\phi'(z)}{1+(1-2\alpha)\phi(z)}+\frac{\phi'(z)}{1-\phi(z)}
\quad(z\in\Delta).
\end{equation}
To estimate the above relationship \eqref{B p4}, we need to estimate $|f(z)/z|$, but so far this is an open question. So this question seems to be more difficult than what is given in \cite{Rahmatan2017}. Also, we remark that in \cite[proof of Theorem 2.4]{Rahmatan2017} Rahmatan et al. mistakenly used the following relation
\begin{equation*}
  \frac{f'(z)}{f(z)}=\frac{1+(1-2\alpha)\phi(z)}{z(1-\phi(z))}
\end{equation*}
and this means that $f\in \mathcal{S}^*(\alpha)$. Thus Theorem 2.4 and its proof in \cite{Rahmatan2017} is incorrect.

%***********************************

\end{document}